\def\subsection{\@startsection{subsection}{3}
  \z@{.5\linespacing\@plus.7\linespacing}{.1\linespacing}
  {\normalfont\itshape}}
\newtheorem{theorem}{Theorem}[section]
\newtheorem{lemma}[theorem]{Lemma}
\theoremstyle{definition}
\numberwithin{equation}{section}
\mathchardef\hyphen="2D
\def\@tvsp{\mathchoice{{}\mkern-4.5mu}{{}\mkern-4.5mu}{{}\mkern-2.5mu}{}}
\def\ln{\left|\@tvsp\left|\@tvsp\left|}
\def\rn{\right|\@tvsp\right|\@tvsp\right|}
\begin{document}
\title{Maximum momemts of sum of independent random matrices}
\author{March T.~Boedihardjo}
\address{Department of Mathematics, Texas A\&M University, College Station, Texas 77843}
\email{march@math.tamu.edu}
\keywords{}
\subjclass[2010]{}
\begin{abstract}
We show that the maximum moments of the sum of independent positive semidefinite random matrices with given norm upper bounds and norms of expectations is attained
when all the random matrices are the multiplications of certain random variables and the identity matrix.
\end{abstract}
\maketitle
\section{Main result}
\begin{theorem}\label{11}
Let $p,n,N\geq 1$ be natural numbers. Let $L_{1},\ldots,L_{N}>0$ and $\alpha_{1},\ldots,\alpha_{N}\in[0,1]$. Among all independent, positive semidefinite $n\times n$ random matrices $X_{1},\ldots,X_{N}$ satisfying $\|X_{k}\|\leq L_{k}$ and $\displaystyle\|\mathbb{E}X_{k}\|=\alpha_{k}L_{k}$ for all $k=1,\ldots,N$, the quantity
\[\mathbb{E}\circ\mathrm{tr}\left(\sum_{k=1}^{N}X_{k}\right)^{p}\]
is maximized when $\mathbb{P}(X_{k}=L_{k}I)=\alpha_{k}$ and $\mathbb{P}(X_{k}=0)=1-\alpha_{k}$.
\end{theorem}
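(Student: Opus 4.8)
The plan is to optimize over the distribution of one random matrix at a time. Because the $X_{k}$ are independent, expanding $\mathrm{tr}(\sum_{k}X_{k})^{p}=\sum_{k_{1},\dots,k_{p}}\mathrm{tr}(X_{k_{1}}\cdots X_{k_{p}})$ and taking expectations shows that, with the laws of all $X_{j}$ ($j\neq k$) held fixed, the objective $\mathbb{E}\circ\mathrm{tr}(\sum_{k}X_{k})^{p}$ is an affine functional of the law of $X_{k}$: each word contributes a single joint moment $\mathbb{E}[X_{k}^{\otimes m}]$, where $m$ is the number of occurrences of $k$ in the word, contracted against a fixed tensor built from the other matrices. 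Hence it suffices to prove the one-variable statement: for fixed independent positive semidefinite $Y$ (playing the role of $\sum_{j\neq k}X_{j}$), among laws of $X=X_{k}$ with $0\preceq X\preceq LI$ and $\|\mathbb{E}X\|=\alpha L$, the quantity $\psi(\mu):=\mathbb{E}\,\mathrm{tr}(X+Y)^{p}$ is maximized by $\mathbb{P}(X=LI)=\alpha$, $\mathbb{P}(X=0)=1-\alpha$. A telescoping (hybrid) argument replacing $X_{1},\dots,X_{N}$ one by one then yields the theorem.

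For the one-variable problem I would first record that $x\mapsto\mathrm{tr}(x+Y)^{p}$ is convex on the positive semidefinite cone (composition of the affine map $x\mapsto x+Y$ with the convex trace function $A\mapsto\mathrm{tr}(A^{p})$ for $p\geq 1$), so averaging over $Y$ keeps $\psi$ a convex function of the matrix $X$. Maximizing the expectation of a convex function over laws supported in the order interval $[0,LI]$ with a prescribed mean lets me push all mass to the extreme points of $[0,LI]$, which are exactly the scaled orthogonal projections $LP$; equivalently one may use the pointwise operator inequality $X^{2}\preceq LX$ (valid since $X\preceq LI$) to collapse higher powers. Next I would use eigenvalue monotonicity---$A\preceq B$ implies $\mathrm{tr}(f(A))\leq\mathrm{tr}(f(B))$ for nondecreasing $f$---to argue that enlarging the mean in the positive semidefinite order only increases the objective; since $\|\mathbb{E}X\|=\alpha L$ caps the top eigenvalue of $\mathbb{E}X$ at $\alpha L$, the positive-semidefinite-largest admissible mean is the scalar $\alpha LI$, and a coupling that adds independent positive increments inside the available room $LI-X$ shows we may assume $\mathbb{E}X=\alpha LI$.

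It remains to prove the decisive step: among projection-valued laws $X=LP$ with scalar mean $\mathbb{E}P=\alpha I$, the two-point law $P\in\{0,I\}$ is optimal. Writing the objective as a sum over cyclic words, $\mathbb{E}\,\mathrm{tr}(LP+Y)^{p}=\sum L^{\#}\,\mathbb{E}\,\mathrm{tr}(PY^{b_{1}}P\cdots PY^{b_{r}})$ (using $P^{2}=P$), I would seek an affine-in-$P$ majorant, i.e. a matrix $B$ with $\mathrm{tr}B=F(I)-F(0)$, where $F(P)=\mathbb{E}_{Y}\,\mathrm{tr}(LP+Y)^{p}$, satisfying $F(P)\leq F(0)+\mathrm{tr}(BP)$ for every projection $P$; integrating such a bound against any law with $\mathbb{E}P=\alpha I$ gives exactly $\alpha F(I)+(1-\alpha)F(0)$, the claimed maximum. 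Constructing this dual certificate---equivalently, showing the relevant linear functional on laws is maximized at $\{0,I\}$---is the main obstacle, and I expect it to rest on the positivity of traces of products of positive semidefinite matrices arranged in the cyclic pattern above, together with the linear optimization over the mean already carried out. The earlier convexity and monotonicity reductions are comparatively routine; the crux is this collapse from general projections to the commuting two-point configuration.
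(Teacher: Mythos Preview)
Your overall architecture---replace the $X_{k}$ one at a time and reduce to a single-variable comparison $\mathbb{E}\,\mathrm{tr}(X+Y)^{p}\leq\mathbb{E}\,\mathrm{tr}(fI+Y)^{p}$ with $Y$ independent of $X$---is exactly the scheme the paper uses. The difficulty is in the single-variable step, and there your proposal has a genuine gap.

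You reduce (via convexity and a monotone coupling) to $X=LP$ with $P$ a random projection and $\mathbb{E}P=\alpha I$, and then ask for an affine-in-$P$ majorant $F(P)\leq F(0)+\mathrm{tr}(BP)$. But you do not construct $B$, and you flag this yourself as ``the main obstacle.'' Neither the convexity reduction nor the scalar-mean reduction helps with this: projections with $\mathbb{E}P=\alpha I$ still form a rich noncommuting family, and the cyclic-word positivity you hope to invoke is precisely the nontrivial content. So as written the argument is incomplete at the decisive point.

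The paper fills this gap differently, and in a way that makes your two preliminary reductions unnecessary. It proves a deterministic word inequality (its Lemma~2.4): for positive semidefinite $X,Y$ and integers $l_{i},m_{i}\geq 1$ with $l=\sum l_{i}$, $m=\sum m_{i}$,
\[
\bigl|\mathrm{tr}\,X^{l_{1}}Y^{m_{1}}\cdots X^{l_{r}}Y^{m_{r}}\bigr|\ \leq\ \|X\|^{l-1}\,\mathrm{tr}\,XY^{m}.
\]
The proof splits each $X^{l_{i}}=X^{l_{i}/2}X^{l_{i}/2}$, applies the Schatten-norm H\"older inequality with exponents $m/m_{i}$ to the cyclically regrouped blocks $X^{l_{i}/2}Y^{m_{i}}X^{l_{i+1}/2}$, and then bounds each block via the Araki--Lieb--Thirring inequality $|A^{1/2}BA^{1/2}|_{\alpha}^{\alpha}\leq\mathrm{tr}(A^{\alpha}B^{\alpha})$. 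Taking expectations and using independence gives, word by word,
\[
\mathbb{E}\,\mathrm{tr}\,X^{l_{1}}Y^{m_{1}}\cdots X^{l_{r}}Y^{m_{r}}\ \leq\ L^{l-1}\|\mathbb{E}X\|\,\mathbb{E}\,\mathrm{tr}\,Y^{m}\ =\ \mathbb{E}f^{l}\,\mathbb{E}\,\mathrm{tr}\,Y^{m},
\]
which is exactly the value attained by the two-point scalar law. Summing over all words in the expansion of $(X+Y)^{p}$ yields the one-variable inequality directly, with no need to assume $X$ is projection-valued or that $\mathbb{E}X$ is scalar. In your language, this trace inequality \emph{is} the affine majorant: the right-hand side is linear in $X$ (through $\mathrm{tr}\,XY^{m}$), and equality holds at $X=0$ and $X=LI$. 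So the missing ingredient in your outline is precisely this H\"older/Araki--Lieb--Thirring estimate.
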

In the case when $N=n$, $L_{k}=1$ and $\alpha_{k}=\frac{1}{n}$, by \cite[Theorem 2.5]{Johnson}, it follows that for all independent, positive semidefinite $n\times n$
random matrices $X_{1},\ldots,X_{n}$ satisfying $\|X_{k}\|\leq 1$ and $\|\mathbb{E}X_{k}\|=\frac{1}{n}$ for all $k=1,\ldots,n$,
\[\mathbb{E}\circ\mathrm{tr}\left(\sum_{k=1}^{n}X_{k}\right)^{p}\leq\left(C\frac{p}{\log p}\right)^{p}n,\]
where $C$ is a universal constant.
\section{Proof of the main result}
\begin{lemma}[H\"older inequality, \cite{Bhatia}, Corollary IV.2.6]\label{21}
Let $p_{1},\ldots,p_{r}\geq 1$ with $\frac{1}{p_{1}}+\ldots+\frac{1}{p_{r}}=1$. Let $A_{1},\ldots,A_{r}$ be $n\times n$ matrices. Then
\[|A_{1}\ldots A_{r}|_{1}\leq|A_{1}|_{p_{1}}\ldots|A_{r}|_{p_{r}}\]
\end{lemma}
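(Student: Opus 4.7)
The plan is to bootstrap the $r$-factor H\"older inequality from the two-factor version, and to derive the two-factor version from Horn's multiplicative singular value inequality combined with the classical scalar H\"older.

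First, I would strengthen the statement to the \emph{generalized} Schatten--H\"older inequality: if $1/p_{0}=1/p_{1}+\ldots+1/p_{r}$ with $p_{0}\in[1,\infty]$, then
\[
|A_{1}\cdots A_{r}|_{p_{0}}\leq|A_{1}|_{p_{1}}\cdots|A_{r}|_{p_{r}}.
\]
The lemma is the $p_{0}=1$ case. I would argue by induction on $r$; the base case $r=1$ is trivial. For the inductive step, set $1/p'=1/p_{2}+\ldots+1/p_{r}$, so that $1/p_{0}=1/p_{1}+1/p'$. Writing $A_{1}\cdots A_{r}=A_{1}(A_{2}\cdots A_{r})$ and applying the two-factor generalized H\"older gives $|A_{1}\cdots A_{r}|_{p_{0}}\leq|A_{1}|_{p_{1}}\,|A_{2}\cdots A_{r}|_{p'}$, after which the inductive hypothesis applied to $A_{2}\cdots A_{r}$ (an $(r-1)$-fold product with matching exponents) finishes the estimate.

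The work is therefore concentrated in the two-factor statement $|AB|_{s}\leq|A|_{p}|B|_{q}$ whenever $1/s=1/p+1/q$. I would derive this from Horn's multiplicative singular value inequality
\[
\prod_{j=1}^{k}s_{j}(AB)\leq\prod_{j=1}^{k}s_{j}(A)\,s_{j}(B),\qquad k=1,\ldots,n,
\]
which says that $(\log s_{j}(AB))_{j}$ is weakly majorized by $(\log s_{j}(A)+\log s_{j}(B))_{j}$. Applying the Hardy--Littlewood--P\'olya majorization theorem with the convex increasing function $t\mapsto e^{st}$ converts the log-majorization into $\sum_{j}s_{j}(AB)^{s}\leq\sum_{j}(s_{j}(A)s_{j}(B))^{s}$. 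Taking $s$-th roots and applying the classical scalar H\"older inequality on the right then yields $|AB|_{s}\leq|A|_{p}|B|_{q}$.

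The main obstacle is Horn's inequality itself, which I would prove via the $k$-th exterior power: one has $(AB)^{\wedge k}=A^{\wedge k}B^{\wedge k}$, and the operator norm of $C^{\wedge k}$ equals $\prod_{j=1}^{k}s_{j}(C)$, so submultiplicativity of the operator norm immediately delivers the desired inequality. Minor care is needed when some of the exponents equal $\infty$, handled by the usual conventions that $1/\infty=0$ and that $|\cdot|_{\infty}$ is the operator norm, but the argument is otherwise unchanged.
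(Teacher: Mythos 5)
The paper does not prove this lemma at all: it is quoted directly from \cite{Bhatia} (Corollary IV.2.6) as a known result, so there is no in-paper argument to compare against. Your proposal is a correct, self-contained proof, and it is in fact essentially the standard route taken in the cited reference: reduce the $r$-factor inequality to the two-factor generalized H\"older by induction (the strengthening to general $p_{0}$ is exactly what makes the induction close, since the intermediate product must be measured in $|\cdot|_{p'}$), derive the two-factor case from Horn's weak log-majorization $\prod_{j\leq k}s_{j}(AB)\leq\prod_{j\leq k}s_{j}(A)s_{j}(B)$ via the convex increasing function $t\mapsto e^{st}$ and then scalar H\"older on the singular values, and obtain Horn's inequality from $(AB)^{\wedge k}=A^{\wedge k}B^{\wedge k}$ together with $\|C^{\wedge k}\|=\prod_{j\leq k}s_{j}(C)$. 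The only point worth flagging is the degenerate case of vanishing singular values, where the logarithms are $-\infty$; this is handled either by phrasing the majorization multiplicatively (as you implicitly do) or by a routine perturbation, and does not affect the argument. For the purposes of this paper, simply citing Bhatia as the author does is sufficient, but your proof would serve as a valid replacement.
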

\begin{lemma}[Araki-Lieb-Thirring inequality, \cite{Bhatia}, Exercise IX.2.11]
Let $\alpha\geq 1$. Let $A,B$ be positive semidefinite $n\times n$ matrices. Then
\[\mathrm{tr}(ABA)^{\alpha}\leq\mathrm{tr}(A^{2\alpha}B^{\alpha}).\]
\end{lemma}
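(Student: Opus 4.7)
My plan is to recast the inequality in its standard Lieb--Thirring form and prove it via a log-majorization. Substituting $A\mapsto A^{1/2}$ reduces the claim to
\[
\mathrm{tr}\bigl((A^{1/2}BA^{1/2})^{\alpha}\bigr)\leq\mathrm{tr}(A^{\alpha}B^{\alpha}),\qquad A,B\succeq 0,\ \alpha\geq 1.
\]
Because $A^{1/2}BA^{1/2}$ is positive semidefinite with the same nonzero spectrum as $AB$, the left side equals $\sum_{i}\lambda_{i}^{\downarrow}(AB)^{\alpha}$, and every eigenvalue of $AB$ is a nonnegative real number.

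I would then establish the stronger log-majorization
\[
\prod_{i=1}^{k}\lambda_{i}^{\downarrow}(AB)^{\alpha}\leq\prod_{i=1}^{k}\lambda_{i}^{\downarrow}(A^{\alpha}B^{\alpha})\qquad(k=1,\ldots,n),
\]
from which the trace inequality follows by Ky Fan's dominance principle for log-majorized nonnegative sequences (applied to the increasing, log-convex function $t\mapsto t$). Passing to $k$-th antisymmetric tensor powers $\Lambda^{k}A,\Lambda^{k}B$ (still positive semidefinite) and using the identity $\prod_{i=1}^{k}\lambda_{i}^{\downarrow}(M)=\lambda_{1}(\Lambda^{k}M)$ for any $M$ with nonnegative real spectrum, the log-majorization reduces to its $k=1$ case $\rho(AB)^{\alpha}\leq\|A^{\alpha/2}B^{\alpha}A^{\alpha/2}\|$.

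For $\alpha=2$ this top-eigenvalue inequality is elementary: $\rho(AB)^{2}=\rho((AB)^{2})\leq\|(AB)^{2}\|\leq\|AB\|^{2}=\|(AB)(AB)^{*}\|=\|AB^{2}A\|$, using only the bound $\rho\leq\|\cdot\|$ and the $C^{*}$-identity. Iterating with $(A,B)$ replaced by $(A^{2^{j}},B^{2^{j}})$ handles all dyadic exponents $\alpha=2^{k}$. The main obstacle will be passing from dyadic to arbitrary real $\alpha\geq 1$; my plan is to invoke the monotonicity in $t$ of $t\mapsto\|A^{t/2}B^{t}A^{t/2}\|^{1/t}$, proved via Hadamard's three-lines theorem applied to the operator-valued analytic function $z\mapsto A^{z/2}B^{z}A^{z/2}$ on an appropriate vertical strip. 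Since $\|A^{1/2}BA^{1/2}\|=\rho(AB)$ at $t=1$, this monotonicity upgrades the dyadic bound to the full range $\alpha\geq 1$, completing the proof.
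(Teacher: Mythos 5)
The paper offers no proof of this lemma at all; it is imported verbatim from Bhatia (Exercise IX.2.11) and used as a black box, so your proposal must stand on its own. Its architecture is the standard Araki-style proof and is sound in outline: reduce to the log-majorization $\prod_{i=1}^{k}\lambda_{i}^{\downarrow}(AB)^{\alpha}\le\prod_{i=1}^{k}\lambda_{i}^{\downarrow}(A^{\alpha}B^{\alpha})$, collapse to the $k=1$ case via antisymmetric tensor powers, then sum. Two small points there: the passage from log-majorization to the trace inequality is the fact that $x\prec_{\log}y$ implies $x\prec_{w}y$ for nonnegative sequences, which rests on $s\mapsto e^{s}$ being convex and increasing (calling $t\mapsto t$ ``log-convex'' is not the right description of this hypothesis); and since you will need imaginary powers $A^{is}$ to be unitary, you should work with $A+\varepsilon I$, $B+\varepsilon I$ and let $\varepsilon\to0$ at the end.

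The genuine gap is in the final step, which is where all the content lives. First, the claimed monotonicity of $t\mapsto\|A^{t/2}B^{t}A^{t/2}\|^{1/t}$ is, applied to the pair $(A^{s},B^{s})$, exactly equivalent to the $k=1$ inequality for all exponents, so your dyadic computation buys nothing once you have it --- and the three-lines argument you sketch does not go through for the function $z\mapsto A^{z/2}B^{z}A^{z/2}$. On the vertical line $\mathrm{Re}\,z=t_{0}$ that function equals $A^{t_{0}/2}A^{is/2}B^{t_{0}}B^{is}A^{t_{0}/2}A^{is/2}$; the unitaries $A^{is/2}$ and $B^{is}$ are trapped in the interior of the product and cannot be absorbed, so the best boundary bound is $\|A\|^{t_{0}}\|B\|^{t_{0}}$ rather than $\|A^{t_{0}/2}B^{t_{0}}A^{t_{0}/2}\|$, and three lines yields nothing useful. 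The repair is to interpolate the unsymmetrized function $F(z)=A^{z}B^{z}$ on the strip $0\le\mathrm{Re}\,z\le1$: there the unitaries sit at the two ends, so $\|F(is)\|=1$ and $\|F(1+is)\|=\|A^{is}(AB)B^{is}\|=\|AB\|$, and the three-lines theorem gives the Cordes inequality $\|A^{s}B^{s}\|\le\|AB\|^{s}$ for $0\le s\le1$. From this, $\rho(A^{s}B^{s})=\|A^{s/2}B^{s/2}\|^{2}\le\|A^{1/2}B^{1/2}\|^{2s}=\rho(AB)^{s}$, and taking $s=1/\alpha$ with $(A,B)$ replaced by $(A^{\alpha},B^{\alpha})$ is precisely the $k=1$ case $\rho(AB)^{\alpha}\le\rho(A^{\alpha}B^{\alpha})=\|A^{\alpha/2}B^{\alpha}A^{\alpha/2}\|$ for every $\alpha\ge1$, with no dyadic bootstrapping needed. (Alternatively, Cordes follows in two lines from the L\"owner--Heinz theorem: after normalizing $\|AB\|=1$ one has $AB^{2}A\le I$, hence $B^{2}\le A^{-2}$, hence $B^{2s}\le A^{-2s}$, hence $A^{s}B^{2s}A^{s}\le I$.)
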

As an immediate consequence, we have
\begin{lemma}\label{23}
Let $\alpha\geq 1$. Let $A,B$ be positive semidefinite $n\times n$ matrices. Then
\[|ABA|_{\alpha}\leq\left(\mathrm{tr}(A^{2\alpha}B^{\alpha})\right)^{\frac{1}{\alpha}}.\]
\end{lemma}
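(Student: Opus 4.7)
The plan is to derive this inequality as a two-line consequence of the Araki--Lieb--Thirring inequality stated just above. First I would observe that whenever $A$ and $B$ are positive semidefinite, so is the triple product $ABA$: for any vector $v$, $\langle ABAv,v\rangle = \langle B(Av),Av\rangle\geq 0$ since $B\geq 0$. Consequently $|ABA|=ABA$, and by the definition of the Schatten $\alpha$-norm on positive operators (which reduces via the spectral theorem to $(\mathrm{tr}\,X^{\alpha})^{1/\alpha}$ when $X\geq 0$),
\[
|ABA|_{\alpha} = \left(\mathrm{tr}(ABA)^{\alpha}\right)^{1/\alpha}.
\]

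Next I would apply the Araki--Lieb--Thirring inequality to the scalar $\mathrm{tr}(ABA)^{\alpha}$ inside the parentheses to obtain
\[
\mathrm{tr}(ABA)^{\alpha} \leq \mathrm{tr}(A^{2\alpha}B^{\alpha}).
\]
Since $t\mapsto t^{1/\alpha}$ is monotone on $[0,\infty)$ and both sides are nonnegative, raising to the $1/\alpha$-th power preserves the inequality and gives the claimed bound.

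There is no real obstacle here --- the author himself flags the lemma as an \emph{immediate consequence} --- and the only care required is in confirming the two small reductions above: that $ABA\geq 0$ so that $|\cdot|$ can be dropped, and that the Schatten $\alpha$-norm of a positive semidefinite matrix coincides with $(\mathrm{tr}\,X^{\alpha})^{1/\alpha}$. Once those are in place, the statement follows by a direct substitution into Araki--Lieb--Thirring.
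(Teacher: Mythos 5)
Your proposal is correct and is precisely the argument the paper intends when it calls the lemma an ``immediate consequence'' of Araki--Lieb--Thirring: note that $ABA\geq 0$, identify $|ABA|_{\alpha}$ with $\left(\mathrm{tr}(ABA)^{\alpha}\right)^{1/\alpha}$, apply the trace inequality, and take $\alpha$-th roots. The two small reductions you flag are exactly the right ones to check, and nothing further is needed.
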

\begin{lemma}\label{24}
Let $l_{1},\ldots,l_{r},m_{1},\ldots,m_{r}\geq 1$. Let $l=l_{1}+\ldots+l_{r}$ and $m=m_{1}+\ldots+m_{r}$. Let $X,Y$ be positive semidefinite $n\times n$ matrices. Then
\[|\mathrm{tr}X^{l_{1}}Y^{m_{1}}\ldots X^{l_{r}}Y^{m_{r}}|\leq\|X\|^{l-1}\mathrm{tr}XY^{m}.\]
\end{lemma}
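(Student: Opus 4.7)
My plan is to reduce the general statement to the $r=1$ case, which I would establish first. For $r=1$, the claim reads $\mathrm{tr}(X^{l}Y^{m})\le\|X\|^{l-1}\mathrm{tr}(XY^{m})$, and I would derive it from operator monotonicity: since $X$ is positive semidefinite with spectrum in $[0,\|X\|]$, the scalar bound $\lambda^{l}\le\|X\|^{l-1}\lambda$ (valid for $l\ge 1$) lifts to the operator inequality $X^{l}\le\|X\|^{l-1}X$. Conjugating both sides by $Y^{m/2}$ and taking traces gives the claim. This argument uses nothing about $l$ being an integer, so it is valid for any real $l\ge 1$—a point that will matter below.

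For $r\ge 2$, I would first use cyclicity of the trace to write
\[
T \;=\; \mathrm{tr}\bigl(Y^{m_r/2}X^{l_1}Y^{m_1}X^{l_2}\cdots X^{l_r}Y^{m_r/2}\bigr),
\]
and then split every $X^{l_k}$ and $Y^{m_k}$ into equal halves so that $T=\mathrm{tr}(C_1 C_2\cdots C_{2r})$, where (with the convention $m_0:=m_r$)
\[
C_{2k-1}=Y^{m_{k-1}/2}X^{l_k/2},\qquad C_{2k}=X^{l_k/2}Y^{m_k/2}.
\]
Next I apply Lemma~\ref{21} with Schatten exponents $\alpha_{2k-1}=2m/m_{k-1}$ and $\alpha_{2k}=2m/m_k$; their reciprocals sum to $\sum_k (m_{k-1}+m_k)/(2m)=1$ and every $\alpha_j\ge 2$. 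Since $C_{2k}^{\ast}C_{2k}=Y^{m_k/2}X^{l_k}Y^{m_k/2}$, Lemma~\ref{23} applied with $A=Y^{m_k/2}$, $B=X^{l_k}$, and exponent $\alpha=m/m_k\ge 1$ gives
\[
|C_{2k}|_{\alpha_{2k}}^{\alpha_{2k}} \;=\; \mathrm{tr}\bigl((Y^{m_k/2}X^{l_k}Y^{m_k/2})^{m/m_k}\bigr) \;\le\; \mathrm{tr}(X^{l_k m/m_k}Y^{m}),
\]
to which the $r=1$ case (used with the fractional exponent $l_k m/m_k\ge 1$) applies, bounding the right side by $\|X\|^{l_k m/m_k-1}\mathrm{tr}(XY^{m})$. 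A parallel estimate with $m_{k-1}$ in place of $m_k$ handles each $C_{2k-1}$.

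Multiplying all $2r$ factor bounds and collecting exponents, the power of $\|X\|$ collapses to $\sum_k l_k-(m+m)/(2m)=l-1$ and the power of $\mathrm{tr}(XY^{m})$ to $(m+m)/(2m)=1$, yielding exactly the claimed inequality. The principal obstacle is locating the correct H\"older exponents: they are pinned down by three simultaneous constraints—(i) their reciprocals must sum to $1$; (ii) $\alpha_j/2\ge 1$ so that Lemma~\ref{23} applies; and (iii) every trace produced by Lemma~\ref{23} must carry the common $Y$-exponent $m$, so that a single invocation of the base case finishes each factor. The symmetric pairing $\alpha_{2k-1}=2m/m_{k-1}$, $\alpha_{2k}=2m/m_k$ is essentially the unique choice meeting all three, and once it is found the exponent bookkeeping falls out cleanly.
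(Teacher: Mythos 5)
Your proof is correct, and although it relies on the same two tools as the paper (the H\"older inequality of Lemma~\ref{21} and the Araki--Lieb--Thirring bound in the form of Lemma~\ref{23}), the decomposition is genuinely different. The paper splits only the powers of $X$ in half, cycles one $X^{l_1/2}$ to the end, and applies H\"older to the $r$ blocks $X^{l_i/2}Y^{m_i}X^{l_{i+1}/2}$ with exponents $m/m_i$; the surplus power of $X$ in each block is stripped off by the ideal property of the Schatten norm \emph{before} Araki--Lieb--Thirring is invoked with $A=X^{m_i/(2m)}$, $B=Y^{m_i}$. You instead split both the $X$'s and the $Y$'s, work with $2r$ H\"older factors and exponents $2m/m_{k-1}$, $2m/m_k$, and apply Araki--Lieb--Thirring with the roles reversed ($A=Y^{m_k/2}$, $B=X^{l_k}$); this produces traces of the form $\mathrm{tr}(X^{l_km/m_k}Y^{m})$, which you then reduce using a separately established $r=1$ base case $\mathrm{tr}(X^{s}Y^{m})\le\|X\|^{s-1}\mathrm{tr}(XY^{m})$ valid for real $s\ge 1$. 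The exponent bookkeeping closes correctly in both arguments to $\|X\|^{l-1}\mathrm{tr}(XY^{m})$. Your route costs one extra (easy) lemma but avoids the paper's slightly delicate choice of the fractional power $X^{m_i/(2m)}$ inside the Schatten norm; the paper's route is shorter once that choice is made. One terminological remark: the base-case operator inequality $X^{l}\le\|X\|^{l-1}X$ follows from the functional calculus applied to the single positive operator $X$ (the scalar inequality holds pointwise on the spectrum); it does not require operator monotonicity, and the step is valid as you use it.
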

\begin{proof}
\begin{eqnarray*}
|\mathrm{tr}X^{l_{1}}Y^{m_{1}}X^{l_{2}}Y^{m_{2}}\ldots X^{l_{r}}Y^{m_{r}}|&=&
|\mathrm{tr}X^{\frac{l_{1}}{2}}X^{\frac{l_{1}}{2}}Y^{m_{1}}X^{\frac{l_{2}}{2}}X^{\frac{l_{2}}{2}}Y^{m_{2}}\ldots X^{\frac{l_{r}}{2}}X^{\frac{l_{r}}{2}}Y^{l_{r}}|
\\&=&|\mathrm{tr}X^{\frac{l_{1}}{2}}Y^{m_{1}}X^{\frac{l_{2}}{2}}X^{\frac{l_{2}}{2}}Y^{m_{2}}\ldots X^{\frac{l_{r}}{2}}X^{\frac{l_{r}}{2}}Y^{l_{r}}X^{\frac{l_{1}}{
2}}|\\&\leq&|X^{\frac{l_{1}}{2}}Y^{m_{1}}X^{\frac{l_{2}}{2}}X^{\frac{l_{2}}{2}}Y^{m_{2}}\ldots X^{\frac{l_{r}}{2}}X^{\frac{l_{r}}{2}}Y^{l_{r}}X^{\frac{l_{1}}{2}}|_{1}\\&\leq&|X^{\frac{l_{1}}{2}}Y^{m_{1}}X^{\frac{l_{2}}{2}}|_{\frac{m}{m_{1}}}|X^{\frac{l_{2}}{2}}Y^{m_{2}}X^{\frac{l_{3}}{2}}|_{\frac{m}{m_{2}}}\ldots|X^{\frac{l_{r}}{2}}Y^{m_{r}}X^{\frac{l_{1}}{2}}|_{\frac{m}{m_{r}}}.
\end{eqnarray*}
To get the first equality, we write $X^{l_{1}}=X^{\frac{l_{1}}{2}}X^{\frac{l_{1}}{2}}$, $X^{l_{2}}=X^{\frac{l_{2}}{2}}X^{\frac{l_{2}}{2}}$ etc. The second equality follows by moving $X^{\frac{l_{1}}{2}}$ to the end of the the product in the trace. The last inequality follows from Lemma \ref{21}.

To simplify the notation, let $l_{r+1}=l_{1}$. Then we obtain
\[|\mathrm{tr}X^{m_{1}}Y^{m_{1}}\ldots X^{l_{r}}Y^{m_{r}}|\leq\prod_{i=1}^{r}|X^{\frac{l_{i}}{2}}Y^{m_{i}}X^{\frac{l_{i}}{2}}|_{\frac{m}{m_{i}}}.\]
For each $1\leq i\leq r$, we have
\[|X^{\frac{l_{i}}{2}}Y^{m_{i}}X^{\frac{l_{i+1}}{2}}|_{\frac{m}{m_{i}}}\leq\|X\|^{\frac{l_{i}}{2}-\frac{m_{i}}{2m}+\frac{l_{i+1}}{2}-\frac{m_{i}}{2m}}|X^{\frac{m_{i}}{2m}}Y^{m_{i}}X^{\frac{m_{i}}{2m}}|_{\frac{m}{m_{i}}}\leq\|X\|^{\frac{l_{i}+l_{i+1}}{2}-\frac{m_{i}}{m}}\left(\mathrm{tr}XY^{m}\right)^{\frac{m_{i}}{m}},\]
where the first inequality follows from the ideal property of the Schatten norm and the second inequality follows from Lemma \ref{23}. Therefore,
\[\mathrm{tr}X^{l_{1}}Y^{m_{1}}\ldots X^{l_{r}}Y^{m_{r}}\leq\prod_{i=1}^{r}\left(\|X\|^{\frac{l_{i}+l_{i+1}}{2}-\frac{m_{i}}{m}}\left(\mathrm{tr}XY^{m}\right)^{\frac{m_{i}}{m}}\right).\]
The sum of the powers of $\|X\|$ is given by
\[\sum_{i=1}^{r}\left(\frac{l_{i}+l_{i+1}}{2}-\frac{m_{i}}{m}\right)=\frac{1}{2}\left(\sum_{i=1}^{r}l_{i}+\sum_{i=1}^{r}l_{i+1}\right)-1=\frac{1}{2}(l+l)-1=l-1.\]
We conclude that
\[\mathrm{tr}X^{l_{1}}Y^{m_{1}}\ldots X^{l_{r}}Y^{m_{r}}\leq\|X\|^{l-1}\mathrm{tr}XY^{m}.\]
\end{proof}
\begin{lemma}\label{25}
Let $l_{1},\ldots,l_{r},m_{1},\ldots,m_{r}\geq 1$. Let $l=l_{1}+\ldots+l_{r}$ and $m=m_{1}+\ldots+m_{r}$. Let $X,Y$ be independent, positive semidefinite $n\times n$ random matrices such that $\|X\|\leq L$. Then
\[\mathbb{E}\circ\mathrm{tr}X^{l_{1}}Y^{m_{1}}\ldots X^{l_{r}}Y^{m_{r}}\leq\mathbb{E}\circ\mathrm{tr}f^{l}Y^{m},\]
where $f$ is a random variable on $\{0,L\}$ independent from $Y$ with $\displaystyle\mathbb{P}(f=L)=\frac{\|\mathbb{E}X\|}{L}$.
\end{lemma}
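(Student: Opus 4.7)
The plan is to reduce the stochastic statement to the deterministic inequality of Lemma \ref{24} by conditioning, and then to chase constants until the right-hand side matches the expectation involving $f$.

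First I would apply Lemma \ref{24} pointwise (i.e.\ $\omega$ by $\omega$) to get
\[
\mathrm{tr}\,X^{l_{1}}Y^{m_{1}}\ldots X^{l_{r}}Y^{m_{r}}\leq\|X\|^{l-1}\mathrm{tr}(XY^{m}).
\]
Since the left-hand side may be negative as a signed quantity, I would first bound it by its absolute value in order to legitimately invoke Lemma \ref{24}. Taking expectations and using the uniform bound $\|X\|\leq L$ gives
\[
\mathbb{E}\circ\mathrm{tr}\,X^{l_{1}}Y^{m_{1}}\ldots X^{l_{r}}Y^{m_{r}}\leq L^{l-1}\,\mathbb{E}\circ\mathrm{tr}(XY^{m}).
\]

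Next I would exploit independence of $X$ and $Y$ to factor the expectation as a trace of the product of the individual expectations:
\[
\mathbb{E}\circ\mathrm{tr}(XY^{m})=\mathrm{tr}\bigl((\mathbb{E}X)(\mathbb{E}Y^{m})\bigr).
\]
Both $\mathbb{E}X$ and $\mathbb{E}Y^{m}$ are positive semidefinite, so the elementary inequality $\mathrm{tr}(AB)\leq\|A\|\,\mathrm{tr}(B)$ for positive semidefinite $A,B$ yields
\[
\mathrm{tr}\bigl((\mathbb{E}X)(\mathbb{E}Y^{m})\bigr)\leq\|\mathbb{E}X\|\,\mathrm{tr}(\mathbb{E}Y^{m}).
\]
Combining, the left-hand side is at most $L^{l-1}\,\|\mathbb{E}X\|\,\mathrm{tr}(\mathbb{E}Y^{m})=L^{l}\cdot\frac{\|\mathbb{E}X\|}{L}\cdot\mathrm{tr}(\mathbb{E}Y^{m})$.

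Finally I would match this with the claimed right-hand side. Since $f$ is scalar-valued with $\mathbb{E}f^{l}=L^{l}\cdot\mathbb{P}(f=L)=L^{l-1}\|\mathbb{E}X\|$, and $f$ is independent of $Y$,
\[
\mathbb{E}\circ\mathrm{tr}(f^{l}Y^{m})=\mathbb{E}[f^{l}]\cdot\mathrm{tr}(\mathbb{E}Y^{m})=L^{l-1}\|\mathbb{E}X\|\,\mathrm{tr}(\mathbb{E}Y^{m}),
\]
which is exactly the bound we obtained. There is no real obstacle here; the only step that requires any thought is the compression of the two-matrix expectation via $\mathrm{tr}(AB)\leq\|A\|\mathrm{tr}(B)$, which is what converts the operator norm $\|\mathbb{E}X\|$ into the scalar probability $\mathbb{P}(f=L)$. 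The deterministic Lemma \ref{24} does the heavy lifting of collapsing the alternating product into a single copy of $X$, after which everything reduces to routine manipulations.
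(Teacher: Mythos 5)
Your proof is correct and follows essentially the same route as the paper: apply Lemma \ref{24} pointwise, bound $\|X\|^{l-1}$ by $L^{l-1}$ (valid since $\mathrm{tr}(XY^{m})=\mathrm{tr}(Y^{m/2}XY^{m/2})\geq 0$), factor the expectation by independence, use $\mathrm{tr}(AB)\leq\|A\|\,\mathrm{tr}(B)$, and identify the result with $\mathbb{E}f^{l}\cdot\mathbb{E}\circ\mathrm{tr}Y^{m}$. No substantive differences from the paper's argument.
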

\begin{proof}
By Lemma \ref{24},
\begin{eqnarray*}
\mathbb{E}\circ\mathrm{tr}X^{l_{1}}Y^{m_{1}}\ldots X^{l_{r}}Y^{m_{r}}\leq\mathbb{E}(\|X\|^{l-1}\mathrm{tr}XY^{m})&\leq&L^{l-1}(\mathbb{E}\circ\mathrm{tr}XY^{m})\\&=&L^{l-1}\mathrm{tr}(\mathbb{E}X)(\mathbb{E}Y^{m})\\&\leq&L^{l-1}\|\mathbb{E}X\|\mathrm{tr}(\mathbb{E}Y^{m})\\&=&\mathbb{E}f^{l}\mathbb{E}\circ\mathrm{tr}Y^{m}\\&=&\mathbb{E}\circ\mathrm{tr}f^{l}Y^{m}.
\end{eqnarray*}
\end{proof}
\begin{lemma}\label{26}
Let $p$ be a natural number. Let $X,Y$ be independent, positive semidefinite $n\times n$ random matrices such that $\|X\|\leq L$. Then
\begin{equation}\label{21e}
\mathbb{E}\circ\mathrm{tr}(X+Y)^{p}\leq\mathbb{E}\circ\mathrm{tr}(fI+Y)^{p},
\end{equation}
where $f$ is a random variable on $\{0,L\}$ independent from $Y$ with $\displaystyle\mathbb{P}(f=L)=\frac{\|\mathbb{E}X\|}{L}$. Equality holds when $X$ and $Y$ commute, $X/L$ is a projection, and $\mathbb{E}X$ is a scalar multiple of the identity.
\end{lemma}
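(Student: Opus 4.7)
The plan is to expand both sides of~\eqref{21e} by the noncommutative binomial theorem and to bound the contribution of every resulting word by invoking Lemma~\ref{25}. Writing $(X+Y)^p=\sum_{\omega\in\{X,Y\}^p}W_\omega$, where $W_\omega$ is the product of the letters in the word $\omega$, I would group the words by the number $l$ of $X$'s they contain. For any $\omega$ containing at least one $X$ and at least one $Y$, cyclic invariance of the trace lets me write $\mathrm{tr}(W_\omega)=\mathrm{tr}(X^{l_1}Y^{m_1}\cdots X^{l_r}Y^{m_r})$ with all $l_i,m_i\geq 1$ and $\sum l_i=l$, $\sum m_i=p-l$, so Lemma~\ref{25} gives $\mathbb{E}\mathrm{tr}(W_\omega)\leq \mathbb{E}(f^l)\,\mathbb{E}\mathrm{tr}(Y^{p-l})$.

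The two pure words $Y^p$ and $X^p$ are not covered by Lemma~\ref{25} and must be handled separately. The $Y^p$ word is immediate: $\mathbb{E}\mathrm{tr}(Y^p)=\mathbb{E}(f^0)\,\mathbb{E}\mathrm{tr}(Y^p)$. For the $X^p$ word I would use the elementary bound $\mathrm{tr}(X^p)\leq\|X\|^{p-1}\mathrm{tr}(X)$ together with $\mathrm{tr}(\mathbb{E}X)\leq n\|\mathbb{E}X\|$ to obtain
\[
\mathbb{E}\mathrm{tr}(X^p)\leq L^{p-1}\mathrm{tr}(\mathbb{E}X)\leq L^{p-1}n\|\mathbb{E}X\|=n\,\mathbb{E}(f^p)=\mathbb{E}\mathrm{tr}(f^pI),
\]
which is exactly the $l=p$ term on the right. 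This little side computation is the main obstacle, since it requires a separate argument from the one used for Lemma~\ref{25}; everything else slots in cleanly.

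Since there are exactly $\binom{p}{l}$ words in $\{X,Y\}^p$ with $l$ letters equal to $X$, summing the per-word bounds gives
\[
\mathbb{E}\mathrm{tr}(X+Y)^p\leq\sum_{l=0}^{p}\binom{p}{l}\,\mathbb{E}(f^l)\,\mathbb{E}\mathrm{tr}(Y^{p-l}).
\]
Because $fI$ commutes with $Y$ and $f$ is independent of $Y$, the ordinary binomial theorem applied to $(fI+Y)^p$, followed by linearity of expectation, identifies the right-hand side with $\mathbb{E}\mathrm{tr}(fI+Y)^p$, which completes the inequality.

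For the equality assertion I would trace through the same bookkeeping under the three stated hypotheses. Commutativity of $X$ and $Y$ reduces $(X+Y)^p$ to $\sum_{l=0}^{p}\binom{p}{l}X^lY^{p-l}$; the projection hypothesis gives $X^l=L^{l-1}X$ for $l\geq 1$; and $\mathbb{E}X=\|\mathbb{E}X\|I$ turns $\mathrm{tr}(\mathbb{E}X\cdot\mathbb{E}Y^{p-l})$ into $\|\mathbb{E}X\|\,\mathbb{E}\mathrm{tr}(Y^{p-l})$. Matching with $\mathbb{E}(f^l)=L^{l-1}\|\mathbb{E}X\|$ for $l\geq 1$ (and trivially for $l=0$) gives equality term by term.
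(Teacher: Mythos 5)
Your proof is correct and follows essentially the same route as the paper's: expand $(X+Y)^{p}$ into words, bound each mixed word via Lemma \ref{25} (after a cyclic rotation of the trace), and reassemble the per-word bounds into the binomial expansion of $(fI+Y)^{p}$; the equality case is also handled the same way. You are in fact more careful than the paper, which simply asserts that (\ref{21e}) ``follows from Lemma \ref{25}'' without separately treating the pure words $X^{p}$ and $Y^{p}$ (which fall outside the hypotheses $l_{i},m_{i}\geq 1$ of that lemma); your elementary bound $\mathbb{E}\circ\mathrm{tr}(X^{p})\leq L^{p-1}n\|\mathbb{E}X\|=\mathbb{E}\circ\mathrm{tr}(f^{p}I)$ correctly closes that small gap.
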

\begin{proof}
Since $(X+Y)^{p}$ is a sum of products of the form $X^{l_{1}}Y^{m_{1}}\ldots X^{l_{r}}Y^{m_{r}}$, (\ref{21e}) follows from Lemma \ref{25}. When $X$ and $Y$ commute,
\[\mathbb{E}\circ\mathrm{tr}(X+Y)^{p}=\sum_{s=0}^{p}\binom{p}{s}\mathbb{E}\circ\mathrm{tr}X^{s}Y^{p-s}=\sum_{s=0}^{p}\binom{p}{s}\mathrm{tr}(\mathbb{E}X^{s})(\mathbb{E}Y^{p-s}).\]
When $X/L$ is a projection and $\mathbb{E}X$ is a scalar multiple of the identity, $\mathbb{E}X^{s}=(\mathbb{E}f^{s})I$. Therefore,
\[\mathbb{E}\circ\mathrm{tr}(X+Y)^{p}=\sum_{s=0}^{p}\binom{p}{s}\mathrm{tr}(\mathbb{E}f^{s})(\mathbb{E}Y^{p-s})=\mathbb{E}\circ\mathrm{tr}(fI+Y)^{p}.\]
\end{proof}
\begin{proof}[Proof of Theorem \ref{11}]
Let $f_{1},\ldots,f_{N}$ be independent random variables on $\{0,L_{1}\},\ldots,\{0,L_{N}\}$, respectively, such that $\mathbb{P}(f_{k}=L_{k})=\alpha_{k}$. We may assume that $f_{1},\ldots,f_{N}$ are independent from $X_{1},\ldots,X_{N}$. Applying Lemma \ref{26} recursively, we obtain
\begin{eqnarray*}
\mathbb{E}\circ\mathrm{tr}(X_{1}+\ldots+X_{N})^{p}&=&\mathbb{E}\circ\mathrm{tr}(X_{1}+(X_{2}+\ldots+X_{N}))^{p}\\&\leq&\mathbb{E}\circ(f_{1}I+(X_{2}+\ldots+X_{N}))^{p}\\&=&\mathbb{E}\circ\mathrm{tr}(X_{2}+(X_{3}+\ldots+X_{N}+f_{1}I))^{p}\\&\leq&\mathbb{E}\circ\mathrm{tr}(f_{2}I+(X_{3}+\ldots+X_{N}+f_{1}I))^{p}\\&=&\mathbb{E}\circ\mathrm{tr}(X_{3}+(X_{4}+\ldots+X_{N}+f_{1}I+f_{2}I))^{p}\\&\leq&\ldots\\&\leq&\mathbb{E}\circ\mathrm{tr}(f_{1}I+\ldots+f_{N}I)^{p}\\&=&\mathbb{E}(f_{1}+\ldots+f_{N})^{p},
\end{eqnarray*}
where the first inequality follows by taking $X=X_{1}$ and $Y=X_{2}+\ldots+X_{N}$ in Lemma \ref{26} and the second inequality follows by taking $X=X_{2}$ and $Y=X_{3}+\ldots+X_{N}+f_{1}I$.

Moreover, when $X_{1},\ldots,X_{N}$ commute, $X_{k}/L_{k}$ is a projection and $\mathbb{E}X_{k}$ is a scalar multiple of the identity, all inequalities become equalities. This happens, for
instance, when $\mathbb{P}(X_{k}=L_{k}I)=\alpha_{k}$ and $\mathbb{P}(X_{k}=0)=1-\alpha_{k}$.
\end{proof}


\begin{thebibliography}{00}
\bibitem{Bhatia} R. Bhatia, {\it Matrix Analysis}, Springer-Verlag, New York, 1997.
\bibitem{Johnson} W.~B. Johnson, G. Schechtman and J. Zinn, {\it Best constants in moment inequalities for linear combinations of independent and exchangeable random
variables}, Ann. Probab. {\bf 13} (1985), 234-253.
\end{thebibliography}
\end{document}